\newcommand{\C}{{\mathbb{C}}}          
\newcommand{\Octoni}{{\mathbb{O}}}     
\newcommand{\Proj}{{\mathbb{P}}}        
\newcommand{\R}{{\mathbb{R}}}          
\newcommand{\Z}{{\mathbb{Z}}}          
\newcommand{\g}{{\mathfrak{g}}}       %
\newcommand{\h}{{\mathfrak{h}}}       %
\newcommand{\m}{{\mathfrak{m}}}       %
\newcommand{\sol}{{\mathfrak{so}}}
\newcommand{\sul}{{\mathfrak{su}}}
\newcommand{\rr}{\rightarrow}
\newcommand{\lrr}{\longrightarrow}
\newcommand{\calR}{{{\cal R}^U}}             %
\newcommand{\ch}{{\mathrm{c}}}
\newcommand{\LC}{{\mathrm{L{\text{-}}C}}}
\newcommand{\na}{{\nabla}}
\newcommand{\nag}{{\nabla^g}}
\newcommand{\nac}{{\nabla^\ch}}
\newcommand{\End}[1]{{\mathrm{End}}\,{#1}}
\newcommand{\Aut}[1]{{\mathrm{Aut}}\,{#1}}
\newcommand{\dx}{{\mathrm{d}}}
\newcommand{\vol}{{\mathrm{vol}}}
\newcommand{\Vol}{{\mathrm{Vol}_{SM}}}
\newcommand{\ric}{{\mathrm{Ric}\,}}
\newtheorem{teo}{Theorem}[section]
\newtheorem{coro}{Corollary}[section]
\newtheorem{prop}{Proposition}[section]
\newenvironment{meuenumerate}
{\begin{enumerate}
  \setlength{\itemsep}{2.5pt}
  \setlength{\parskip}{-1pt}
  \setlength{\parsep}{-1pt}}
{\end{enumerate}}
\def\cyclic{\mathop{\kern0.9ex{{+}
\kern-2.2ex\raise-.28ex\hbox{\Large\hbox{$\circlearrowright$}}}}\limits}
\title{ {\huge{On the characteristic connection of \\ \textit{gwistor} space} \\ \ \ }}
\author{R. Albuquerque\footnote{Departamento de Matem\'atica da Universidade de \'Evora
and Centro de Investiga\c c\~ao em Matem\'atica e Aplica\c c\~oes (CIMA-U\'E), Rua Rom\~ao
Ramalho, 59, 671-7000 \'Evora, Portugal.}\\ \texttt{rpa@uevora.pt}}
\begin{document}

\maketitle

\begin{abstract}

We give a brief presentation of gwistor space, which is a new concept from $G_2$
geometry. Then we compute the characteristic torsion $T^\ch$ of the gwistor space of an
oriented Riemannian 4-manifold with constant sectional curvature $k$ and deduce the
condition under which $T^\ch$ is $\nac$-parallel; this allows for the classification of
the $G_2$ structure with torsion and the characteristic holonomy according to known
references. The case with the Einstein base manifold is envisaged.

\end{abstract}

\vspace*{10mm}

{\bf Key Words:} Einstein metric, gwistor space, characteristic torsion, $G_2$ structure.

\vspace*{4mm}

{\bf MSC 2010:} Primary:  53C10, 53C20, 53C25; Secondary: 53C28

\vspace*{10mm}

The author acknowledges the support of Funda\c{c}\~{a}o Ci\^{e}ncia e Tecnologia,
Portugal, through Centro de Investiga\c c\~ao em Matem\'atica e Aplica\c c\~oes da
Universidade de \'Evora (CIMA-U\'E) and the sabbatical grant SFRH/BSAS/895/2009.

\markright{\hfill  Albuquerque \hfill}

\vspace*{10mm}

\section{Gwistor spaces with parallel characteristic torsion}


\subsection{The purpose}

It has now become clear that every oriented Riemannian 4-manifold $M$ gives rise to a $G_2$-twistor space, as well as its celebrated twistor space. The former was discovered in \cite{AlbSal1,AlbSal2} and we shall start here by recalling how it is obtained. Often we abbreviate the name $G_2$-twistor for \textit{gwistor}, as started in \cite{Alb2}. Briefly, given $M$ as before, the $G_2$-twistor space of $M$ consists of a natural $G_2$ structure on the $S^3$-bundle over $M$ of unit tangent vectors
\begin{equation*}
 SM=\bigl\{u\in TM:\ \,\|u\|=1\bigr\}
\end{equation*}
exclusively induced by the metric $g=\langle\ ,\ \rangle$ and orientation. 

We shall describe the characteristic connection $\nac$ of $SM$ in the case where $M$ is an
Einstein manifold. This guarantees the \textit{gwistor} structure is cocalibrated, an
equivalent condition. And hence the existence of that particular connection by a Theorem
in \cite{FriIva1}. Then we restrict to constant sectional curvature; we deduce the
condition under which the characteristic torsion, i.e. the torsion of the characteristic
connection, is parallel for $\nac$. Finally we are able to deduce its classification,
according with the holonomy obtained and the cases in \cite{Fri1}. The reason why we made
such restriction is that the study of the characteristic connection in the general
Einstein base case is much more difficult and we wish to present the problem. We also
remark that $G_2$ manifolds with parallel characteristic torsion are solutions to the
equations for the common sector of type II super string theory, cf.
\cite{FriIva2,FriIva1}.

The author takes the opportunity to thank the hospitality of the mathematics department of Philipps Universit\"at Marburg, where part of the research work took place. In particular he thanks Ilka Agricola and Thomas Friedrich (Humboldt Universit\"at) for raising the questions which are partly answered here and for pointing many new directions of research.


\subsection{Elements of $G_2$-twistor or gwistor space}
\label{Tiogs}

Let $M$ be an oriented smooth Riemannian 4-manifold and $SM$ its unit tangent sphere bundle. The $G_2$-twistor structure is constructed with the following briefly recalled techniques (cf. \cite{Alb2,AlbSal1,AlbSal2}).

Let $\pi:TM\rr M$ denote the projection onto $M$, let $\na^\LC$ be the Levi-Civita connection of $M$ and let $U$ be the canonical vertical unit vector field over $TM$ pointing outwards of $SM$. More precisely, we define $U$ such that $U_u=u$, $\forall u\in TM$. The Levi-Civita connection of $M$ induces a splitting $TTM\simeq\pi^*TM\oplus\pi^*TM$. The pull-back bundle on the left hand side is the horizontal subspace $\ker\pi^*\na^\LC_.U$ isomorphic to $\pi^*TM$ through $\dx\pi$. The other $\pi^*TM$, on the right, is the vertical subspace $\ker\dx\pi$. We are henceforth referring to the classical decomposition of $TTM$, as displayed in several articles and textbooks.

Restricting $\pi$ to $SM$ we have $T\,SM=H\oplus V$ where $H$ denotes the restriction of the horizontal sub-bundle to $SM$ and $V$ is such that $V_u=u^\perp\subset\pi^*TM$, thus contained on the vertical side. Every vector field over $SM$ may be written as
\begin{equation}\label{split2}
 X=X^h+X^v=X^h+\pi^*\na^\LC_XU.
\end{equation}
The tangent sphere bundle inherits a Riemannian metric, the induced metric from the metric
on $TM$ attributed to Sasaki: $\pi^*g\oplus\pi^*g$. We simply invoke this metric with the
same letter $g$ or by the brackets $\langle\ ,\ \rangle$. Then we may say that $SM$ is the
locus set of the equation $\langle U,U\rangle=1$ and indeed \eqref{split2} is confirmed:
notice $\dx\langle U,U\rangle(X)=2\langle \pi^*\na^\LC_XU,U\rangle$. There is also a
natural map
\begin{equation}\label{aplicacaoteta}
  \theta:TTM\lrr TTM  
\end{equation}
which is a $\pi^*\na^\LC$-parallel endomorphism of $TTM$ identifying $H$ isometrically
with the vertical bundle $\pi^*TM=\ker\dx\pi$ and defined as 0 on the vertical side. It
was introduced in \cite{Alb2,AlbSal1,AlbSal2}. Then we define the horizontal vector field
$\theta^tU$.

The original discovery of the gwistor space is now explained.

Each 4-dimensional vector space $(\pi^*TM)_u,\ u\in SM$, has a natural quaternionic
structure given as follows. Every vector may certainly be written as $\lambda u+X$
with $\lambda\in\R$ and $X\perp u$. Then two such vectors multiply by
\[  (\lambda_1u+X_1)\cdot(\lambda_2u+X_2)=
(\lambda_1\lambda_2-\langle X_1,X_2\rangle)u+\lambda_1 X_2+\lambda_2X_1+X_1\times X_2  \]
where the cross-product $X_1\times X_2$ is given by 
\[ \langle X_1\times X_2,Z\rangle=\pi^*\vol_M(u,X,Y,Z),\quad \forall X,Y,Z \in u^\perp .\]
A conjugation map is obvious: $\overline{\lambda u+X}=\lambda u-X$. With this metric
compatible quaternionic structure (normed algebra with unit) and with the canonical
splitting and the map $\theta$, we may apply the Cayley-Dickson process to obtain an
octonionic structure on $TTM_{|SM}$ having the vertical $U_u=u$ as generator of the reals.
The imaginary part is the tangent bundle to $SM$, with a natural $G_2=\Aut{\Octoni}$
structure. This defines gwistor space.

The tangent bundle $T\,SM$ inherits a metric connection, via the pull-back connection and still preserving the splitting, which we denote by $\na^\star$. On tangent vertical directions, due to the geometry of the 3-sphere with the round metric, we must add a correction term to the pull-back connection. That is, for any $X,Y\in\Gamma(T\,SM)$:
\begin{eqnarray}\label{correcterm}
\na^\star_Y X^v &=& \pi^*\na^\LC_Y X^v-\langle \pi^*\na^\LC_Y X^v,U\rangle U\ =\  \pi^*\na^\LC_Y X^v+\langle X^v,Y^v\rangle U.
\end{eqnarray}
We then let $\calR(X,Y)=\pi^*R(X,Y)U=R^{\pi^*\na^\LC}(X,Y)U$, which is a $V$-valued tensor. We follow the convention $R(X,Y)= [\na^\LC_X,\na^\LC_Y] -\na^\LC_{[X,Y]}$. Notice $\calR(X,Y)=\calR(X^h,Y^h)$. Finally, the Levi-Civita connection $\nag$ of $SM$ is given by
\begin{equation}\label{lcTM}
\nag_XY=\na^\star_XY-\dfrac{1}{2}\calR(X,Y)+A(X,Y)
\end{equation}
where $A$ is the $H$-valued tensor defined by
\begin{equation}\label{AlcTM}
\langle A(X,Y),Z\rangle=\dfrac{1}{2}\bigl(\langle \calR(X,Z), Y\rangle + \langle \calR(Y,Z), X\rangle\bigr),
\end{equation}
for any vector fields $X,Y,Z$ over $SM$.

There are many global differential forms on $SM$. Specially relevant are the 1- and a 2-forms given by
\begin{equation*}
\mu(X)=\langle U,\theta X\rangle\qquad\mbox{and}\qquad\beta(X,Y)=\langle \theta X,Y\rangle-\langle\theta Y,X\rangle.
\end{equation*}
One can easily deduce $\beta=-\dx\mu$.

\subsection{Structure forms of gwistor space}

The easiest way to see other differential forms of gwistor space is by taking an
orthonormal basis on a trivialised neighbourhood as follows. First we take a direct
orthonormal basis $e_0,\ldots,e_3$ of $H$, arising from another one fixed on the
trivialising open subset of $M$, such that $e_0=u\in SM$ at each point $u$, i.e.
$e_0=\theta^tU$. Then we define
\begin{equation}\label{baseadaptada}
 e_4=\theta e_1,\qquad e_5=\theta e_2,\qquad e_6=\theta e_3
\end{equation}
which completes the desired set; we say $e_0,\ldots,e_6$ is a standard or adapted frame.
Note $\theta e_0=U$, as if $u$ has the gift of ubiquity. The dual co-frame is used to
write
\begin{equation*}\label{strforms}
\begin{split}
 \mu=e^0,\quad\quad\vol=e^{0123},\quad\quad\dx\mu=e^{41}+e^{52}+e^{63},\quad\quad
\alpha=e^{456},\\
 \alpha_1=e^{156}+e^{264}+e^{345},\qquad\quad\alpha_2=e^{126}+e^{234}+e^{315},\qquad\quad\alpha_3=e^{123}.
\end{split}
\end{equation*}
These are all global well-defined forms. They satisfy the basic structure equations, cf.
\cite{Alb2}:
\begin{equation}\label{bse1} 
\begin{split}
*\alpha=\vol=\mu\wedge\alpha_3=\pi^*\vol_M,\ \ \ \ \ \ \ \ \
*\alpha_1=-\mu\wedge\alpha_2,\ \ \ \ \ \ \ \ *\alpha_2=\mu\wedge\alpha_1,\hspace{0cm}\\
*\dx\mu=\frac{1}{2}\mu\wedge(\dx\mu)^2,\ \ \ \ \ *(\dx\mu)^2=2\mu\wedge\dx\mu,\ \ \ \ \
(\dx\mu)^3\wedge\mu=6\Vol,\hspace{0cm} \\
\alpha_1\wedge\alpha_2=3*\mu=\frac{1}{2}(\dx\mu)^3,\ \ \ \ \
\dx\mu\wedge\alpha_i=\dx\mu\wedge*\alpha_i=\alpha_0\wedge\alpha_i=0,
\end{split}
\end{equation}
$\forall i=0,1,2$, where we wrote $\alpha=\alpha_0$. We use the notation $e^{ab\cdots
jk}=e^ae^b\cdots e^je^k$ and often omit the wedge product symbol, like in $(\dx\mu)^2$.

We have given the name \textit{$G_2$-twistor} or \textit{gwistor space} to the $G_2$
structure on $SM$ defined by the stable 3-form
\begin{equation*}
 \phi=\alpha-\mu\wedge\dx\mu-\alpha_2
\end{equation*}
(as explained previously, it is induced by the Cayley-Dickson process using the vector
field $U$ and the volume forms $\vol,\alpha$). Let $*$ denote the Hodge star product. Then
\begin{equation*}
 *\phi=\vol-\dfrac{1}{2}(\dx\mu)^2-\mu\wedge\alpha_1.
\end{equation*}
We know from \cite[Proposition 2.4]{Alb2} that 
\begin{equation*}\label{dphiedestrelaphi1}
\dx\phi=\calR\alpha+\underline{r}\vol-(\dx\mu)^2-2\mu\wedge\alpha_1\qquad\ \mbox{and}\
\qquad \dx*\phi=-\rho\wedge\vol
\end{equation*}
where we have set
\begin{equation}\label{dalpha}
 \calR\alpha\ =\ \sum_{0\leq i<j\leq 3}R_{ij01}e^{ij56}+R_{ij02}e^{ij64}+R_{ij03}e^{ij45},
\end{equation}
with $R_{ijkl}=\langle R(e_i,e_j)e_k,e_l\rangle$, \,$\forall i,j,k,l\in\{0,1,2,3\}$. 

Also, $\underline{r}=r(U,U)$ is a function, with $r$ the Ricci tensor, and $\rho$ is the 1-form $(\ric U)^\flat\in\Omega^0(V^*)$, vanishing on $H$ and restricted to vertical tangent directions. One may view $\rho$ as the vertical lift of $r(\ ,U)$. We continue considering the adapted frame $e_0,\ldots,e_6$ on $SM$; then
\begin{equation}\label{roUric}
 \rho=\sum_{i,k=1}^3R_{ki0k}e^{i+3}\qquad\quad\mbox{and}
\quad\qquad\underline{r}=\sum_{j=1}^3R_{j00j}.
\end{equation}
We also remark
\begin{equation}\label{outrasderivadas}
 \dx\alpha=\calR\alpha,\qquad\quad\dx\alpha_2=2\mu\wedge\alpha_1-\underline{r}\vol.
\end{equation}

We know the gwistor space $SM$ is never a geometric $G_2$ manifold. Recall that any given $G_2$-structure $\phi$ is parallel for the Levi-Civita connection if and only if $\phi$ is a harmonic 3-form. Indeed, our $\dx\phi$ never vanishes. However, an auspicious result leads us forward. $(SM,\phi)$ is cocalibrated, ie. $\delta\phi=0$, if and only if $M$ is an Einstein manifold, cf. \cite{Alb2,AlbSal1,AlbSal2}. 

The curvature of the unit tangent sphere bundle has been studied, but the Riemannian
holonomy group remains unknown in general (cf. \cite{AbaKow,Blair} and the references
therein). From the point of view of gwistor spaces, hence just on the 4-dimensional base
space, we are interested on the holonomy of the $G_2$ characteristic connection.


\subsection{The characteristic connection}
\label{Thecharconnec}

Following the theory of metric connections on a Riemannian 7-manifold $(N,\phi)$ with $G_2$ structure, cf. \cite{Agri,FriIva2,FriIva1}, the \textit{characteristic connection} consists of a metric connection with skew-symmetric torsion for which $\phi$ is parallel. If it exists, then it is unique. Formally we may write
\begin{equation*}
 \langle\nac_XY,Z\rangle=\langle\nag_XY,Z\rangle+\frac{1}{2}T^\ch(X,Y,Z)
\end{equation*}
where $g$ denotes the metric and $\nag$ the Levi-Civita connection. If $\phi$ is cocalibrated, then such $T^\ch$ exists; it is given by
\begin{equation}\label{torcaocaracterisgeral}
 T^\ch=*\dx\phi-\frac{1}{6}\langle\dx\phi,*\phi\rangle\phi
\end{equation}
cf. \cite[Theorems 4.7 and 4.8]{FriIva1}\footnote{Notice we use a different orientation than that in \cite{FriIva1}. Therefore, we have to replace $\ast$ by $-\ast$ in formulas given there.}.

We recall there are three particular $G_2$-modules decomposing the space $\Lambda^3$ of
3-forms (cf. \cite{Besse,Bryant2,FerGray}). They are $\Lambda^3_1,\ \Lambda^3_7,\
\Lambda^3_{27}$, with the lower indices standing for the respective dimensions. In the
same reasoning, $\Lambda^2=\Lambda^2_7\oplus\Lambda^2_{14}$. Thus, by Hodge duality,
$\dx\phi$ has three invariant structure components and $\delta\phi$ has two. In gwistor
space we have proved the latter vanish altogether, or not, with $\rho$, given in
\eqref{roUric}. The analysis of the tensor $\dx\phi$ is struck with the never-vanishing
component in $\Lambda^3_{27}$. It is of pure type $\Lambda^3_{27}$ if and only if $M$ is
an Einstein manifold with Einstein constant $-6$ (see \cite[Theorem 3.3]{Alb2}). 

Apart from a Ricci tensor dependent component, the curvature tensor of $M$ contained in $\dx\phi=\calR\alpha+\cdots$ remains much hidden in the $\Lambda^3_{27}$ subspace.

We have deduced a formula for the Levi-Civita connection $\nag$ of $SM$, shown in
\eqref{lcTM}. The characteristic connection $\nac$ is to be deduced here in the
cocalibrated case given by a constant sectional curvature metric on $M$. In our opinion,
this analysis corroborates the correct choice of techniques in dealing with the equations
of gwistor space.

\subsection{Characteristic torsion of gwistor space}

Let us start by assuming $M,g$ is an Einstein manifold with Einstein constant $\lambda$. Such condition is given by any of the following, where $\lambda$ is a priori a scalar function on $M$:
\begin{equation*}
 r=\lambda g\quad \Leftrightarrow\quad \ric U=\lambda U\quad \Leftrightarrow\quad\underline{r}=\lambda.
\end{equation*}
In our setting it is also equivalent to $\dx*\phi=0$. Then $\lambda$ is a constant.
\begin{prop}
The characteristic connection $\nac=\nag+\frac{1}{2}T^\ch$ of $SM$ is given by
\begin{equation*}
 T^\ch=*(\calR\alpha)+\frac{2\lambda-6}{3}\alpha+\frac{\lambda}{3}\mu\wedge\dx\mu+
\frac{\lambda}{3}\alpha_2.
\end{equation*}
Moreover, $\delta T^\ch=0$.
\end{prop}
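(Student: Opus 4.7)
My plan is to apply the Friedrich--Ivanov formula \eqref{torcaocaracterisgeral} to the already-recorded expressions for $\dx\phi$ and $*\phi$. Since $M$ is Einstein, $\underline{r}=\lambda$ is constant and $\dx\phi=\calR\alpha+\lambda\vol-(\dx\mu)^2-2\mu\wedge\alpha_1$. Applying $*$ term by term via the structure equations \eqref{bse1}, together with $*(\mu\wedge\alpha_1)=\alpha_2$ (which follows from $*\alpha_2=\mu\wedge\alpha_1$ and $**=\mathrm{Id}$ on $3$-forms), yields
\[ *\dx\phi\ =\ *(\calR\alpha)+\lambda\,\alpha-2\mu\wedge\dx\mu-2\alpha_2. \]

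The scalar $c:=\langle\dx\phi,*\phi\rangle$ is extracted next from $\dx\phi\wedge\phi=c\,\Vol$. Most of the twelve wedge products between the summands of $\dx\phi$ and $\phi=\alpha-\mu\wedge\dx\mu-\alpha_2$ vanish by \eqref{bse1}: $\alpha\wedge\alpha_i=0$, $\dx\mu\wedge\alpha_i=0$, $\mu\wedge\mu=0$, and an index check kills $(\dx\mu)^2\wedge\alpha$. The three surviving non-curvature contributions are
\[ \lambda\,\vol\wedge\alpha=\lambda\,\Vol,\qquad (\dx\mu)^2\wedge\mu\wedge\dx\mu=\mu\wedge(\dx\mu)^3=6\,\Vol,\qquad 2\mu\wedge\alpha_1\wedge\alpha_2=\mu\wedge(\dx\mu)^3=6\,\Vol. \]
A direct expansion in the adapted frame gives $\calR\alpha\wedge(\mu\wedge\dx\mu)=(-R_{2301}+R_{1302}-R_{1203})\Vol$, which vanishes by the first Bianchi identity, and $\calR\alpha\wedge\alpha_2=(R_{0101}+R_{0202}+R_{0303})\Vol$; using $R_{0c0c}=-R_{c00c}$ together with \eqref{roUric}, this equals $-\underline{r}\,\Vol=-\lambda\,\Vol$. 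Summing everything gives $c=2\lambda+12$.

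Substituting into \eqref{torcaocaracterisgeral} with $c/6=(\lambda+6)/3$ produces the coefficients $\lambda-\tfrac{\lambda+6}{3}=\tfrac{2\lambda-6}{3}$ for $\alpha$ and $-2+\tfrac{\lambda+6}{3}=\tfrac{\lambda}{3}$ for both $\mu\wedge\dx\mu$ and $\alpha_2$, which is the stated formula. For $\delta T^\ch=0$, it suffices to prove $\dx*T^\ch=0$. Using $*(\mu\wedge\dx\mu)=\tfrac12(\dx\mu)^2$ (dual of $*(\dx\mu)^2=2\mu\wedge\dx\mu$) I obtain
\[ *T^\ch\ =\ \calR\alpha+\tfrac{2\lambda-6}{3}\vol+\tfrac{\lambda}{6}(\dx\mu)^2+\tfrac{\lambda}{3}\mu\wedge\alpha_1. \]
The first three pieces are closed: $\calR\alpha=\dx\alpha$ by \eqref{outrasderivadas}, $\vol=\pi^*\vol_M$, and $(\dx\mu)^2$ is clearly closed. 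For the last, $\dx(\mu\wedge\alpha_1)=-\mu\wedge\dx\alpha_1$ (as $\dx\mu\wedge\alpha_1=0$), while differentiating $*\phi=\vol-\tfrac12(\dx\mu)^2-\mu\wedge\alpha_1$ gives $\mu\wedge\dx\alpha_1=\dx*\phi$; the Einstein condition forces $\dx*\phi=-\rho\wedge\vol=0$, so $\dx*T^\ch=0$ and hence $\delta T^\ch=0$.

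The main technical obstacle is the sign-sensitive identification $\calR\alpha\wedge\alpha_2=-\lambda\,\Vol$ in the second step: matching the index positions appearing in $\calR\alpha$ to the Ricci summation in \eqref{roUric} and keeping track of $R_{0c0c}=-R_{c00c}$ are precisely where a careless calculation would land one at the wrong Einstein-constant coefficient.
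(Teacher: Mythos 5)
Your proposal is correct and follows essentially the same route as the paper: apply \eqref{torcaocaracterisgeral}, dualize $\dx\phi$ term by term via \eqref{bse1}, evaluate $\langle\dx\phi,*\phi\rangle=2(\lambda+6)$ through wedge products with $\phi$ (your explicit Bianchi/Ricci computation of $\calR\alpha\wedge\phi=\lambda\Vol$ just fills in the paper's ``some computations''), and then conclude $\delta T^\ch=0$ from $\dx * T^\ch=0$. The only cosmetic difference is in the last step, where the paper reads $*T^\ch=\dx\phi-\frac{\lambda+6}{3}*\phi$ directly off the first line while you re-expand $*T^\ch$ and check closedness term by term using $\calR\alpha=\dx\alpha$ and $\dx*\phi=0$; both are valid.
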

\begin{proof}
We have by \eqref{dalpha} and some computations
\begin{equation*}
 \langle\calR\alpha,
*\phi\rangle\Vol=\calR\alpha\wedge\phi=-\calR\alpha\wedge(\mu\dx\mu+\alpha_2)=\lambda\Vol.
\end{equation*}
Also $\underline{r}\vol\phi=\lambda\Vol,\ \,-(\dx\mu)^2\phi=\mu\wedge(\dx\mu)^3=6\Vol,\
-2\mu\wedge\alpha_1\wedge\phi=2\mu\wedge\alpha_1\wedge\alpha_2=6\Vol$. Hence
$\langle\dx\phi,*\phi\rangle=2(\lambda+6)$. One finds helpful identities in \eqref{bse1}.
Since $*\dx\phi=*\calR\alpha+\lambda\alpha-2\mu\wedge\dx\mu-2\alpha_2$, we get from
\eqref{torcaocaracterisgeral}
\begin{eqnarray*}
 T^\ch&=&*\dx\phi-\frac{2}{6}(\lambda+6)\phi\\
&=&*\calR\alpha+\lambda\alpha-2\mu\wedge\dx\mu-2\alpha_2-(\frac{\lambda}{3}
+2)(\alpha-\mu\wedge\dx\mu-\alpha_2)
\end{eqnarray*}
and the first part of the result follows. From the first line we immediately see $\dx*T^\ch=0$.
\end{proof}

Until the rest of this section we assume $M$ has constant sectional curvature $k$, so that $R_{ijkl}=k(\delta_{il}\delta_{jk}-\delta_{ik}\delta_{jl})$
with $k\in\R$ a constant. Then by \eqref{dalpha}
\begin{equation*}
  \calR\alpha=-k\mu\wedge\alpha_1.
\end{equation*}
In particular,
\begin{equation*}
\dx\phi=3k\vol-(\dx\mu)^2-(k+2)\mu\wedge\alpha_1.
\end{equation*}
Henceforth $\lambda=\underline{r}=3k$ and $*\calR\alpha=-k\alpha_2$, and the following result is immediate.
\begin{prop}
The characteristic torsion of the characteristic connection is given by
\begin{equation}\label{torsaocaracteristicadeespacoforma}
 T^\ch=2(k-1)\alpha+k\mu\wedge\dx\mu .
\end{equation}
\end{prop}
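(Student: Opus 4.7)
The plan is essentially a direct substitution into the formula of the previous proposition, so there is almost nothing to prove beyond bookkeeping. Under constant sectional curvature $k$ the Ricci tensor is $r = 3k\,g$, hence $\lambda = \underline{r} = 3k$. Plugging this into
\begin{equation*}
T^\ch = *(\calR\alpha) + \frac{2\lambda - 6}{3}\alpha + \frac{\lambda}{3}\mu\wedge\dx\mu + \frac{\lambda}{3}\alpha_2,
\end{equation*}
the coefficient of $\alpha$ becomes $2(k-1)$, the coefficient of $\mu\wedge\dx\mu$ becomes $k$, and the coefficient of $\alpha_2$ becomes $k$. The only non-bookkeeping input is the computation of $*(\calR\alpha)$.

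First I would verify the identity $\calR\alpha = -k\,\mu\wedge\alpha_1$ already stated in the text: from \eqref{dalpha} and $R_{ijkl} = k(\delta_{il}\delta_{jk} - \delta_{ik}\delta_{jl})$, only pairs with $\{i,j\} \ni 0$ contribute, and regrouping the resulting monomials $e^{0\,p\,q\,r}$ reproduces the expansion of $\mu\wedge\alpha_1 = e^0\wedge(e^{156}+e^{264}+e^{345})$ up to the overall sign $-k$. Second I would Hodge-dualize: the basic structure equation $*\alpha_2 = \mu\wedge\alpha_1$ in \eqref{bse1}, together with $** = \mathrm{id}$ on $3$-forms in dimension $7$, gives $*(\mu\wedge\alpha_1) = \alpha_2$. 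Therefore $*(\calR\alpha) = -k\,\alpha_2$.

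Finally, assembling these pieces, the $-k\alpha_2$ from $*(\calR\alpha)$ and the $+k\alpha_2$ from $\tfrac{\lambda}{3}\alpha_2$ cancel, leaving
\begin{equation*}
T^\ch = 2(k-1)\,\alpha + k\,\mu\wedge\dx\mu,
\end{equation*}
which is the claimed formula. No genuine obstacle arises; the only step deserving a moment's care is the application of $**=\mathrm{id}$ to turn $*\alpha_2 = \mu\wedge\alpha_1$ into $*(\mu\wedge\alpha_1) = \alpha_2$, since the paper's footnote warns that the orientation convention differs from \cite{FriIva1} — so one should double-check no sign is lost here.
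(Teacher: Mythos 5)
Your proposal is correct and is essentially the paper's own argument: the paper also substitutes $\lambda=3k$, $\calR\alpha=-k\,\mu\wedge\alpha_1$ and $*\calR\alpha=-k\,\alpha_2$ (via the structure equation $*\alpha_2=\mu\wedge\alpha_1$ and $**=\mathrm{id}$) into the preceding proposition and notes the cancellation of the $\alpha_2$ terms, calling the result immediate. Your extra check of $\calR\alpha=-k\,\mu\wedge\alpha_1$ from \eqref{dalpha} and the sign caution are consistent with the paper's conventions, since \eqref{bse1} is stated in the paper's own orientation.
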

Taking formulas \eqref{lcTM} and \eqref{AlcTM}, the next Propositions are the result of
simple computations.
\begin{prop}
For any $X,Y\in TSM$:
\begin{meuenumerate}
\item $\calR(X,Y)=k(\langle\theta Y,U\rangle\theta X-\langle\theta X,U\rangle\theta Y)$; or simply $\calR=k\theta\wedge\mu$
\item $A(X,Y)=\frac{k}{2}\bigl(\langle\theta X,Y\rangle\theta^tU+\langle\theta Y,X\rangle\theta^tU-\mu(X)\theta^tY-\mu(Y)\theta^tX\bigr)$.
 \end{meuenumerate}
\end{prop}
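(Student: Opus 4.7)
The plan is to do a direct calculation that plugs the constant sectional curvature formula into the definitions of $\calR$ and $A$, and then translates everything through the identification $\theta$ between the horizontal and vertical sub-bundles.

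For part (1), I would start from the definition $\calR(X,Y)=R^{\pi^*\na^\LC}(X,Y)U$, which depends only on the horizontal parts $X^h,Y^h$ (since $U$ is parallel along vertical directions up to the round-sphere correction that drops out of the curvature). Because $M$ has constant sectional curvature $k$, the base curvature is
\begin{equation*}
 R(V,W)Z=k\bigl(\langle W,Z\rangle V-\langle V,Z\rangle W\bigr),
\end{equation*}
so, after pulling back and evaluating on $U$ (which corresponds to $\theta^tU=e_0$ under $\dx\pi$), one gets $\pi^*R(X^h,Y^h)U=k(\langle Y^h,e_0\rangle X^h-\langle X^h,e_0\rangle Y^h)$. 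The point is then to rewrite this as a $V$-valued expression: applying $\theta$ turns the horizontal lift into its vertical counterpart, and $\langle Y^h,e_0\rangle=\langle Y,\theta^tU\rangle=\langle\theta Y,U\rangle=\mu(Y)$. This yields the stated formula $\calR(X,Y)=k(\mu(Y)\theta X-\mu(X)\theta Y)$, which is exactly $k\,\theta\wedge\mu$ under the natural interpretation of the wedge of a vector-valued 1-form with a scalar 1-form. (One should also check consistency with \eqref{dalpha}: indeed there the formula $\calR\alpha=-k\mu\wedge\alpha_1$ was already derived using the same constant-curvature substitution.)

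For part (2), the strategy is to feed the expression from (1) into the defining identity \eqref{AlcTM} and read off $A(X,Y)$ by testing against an arbitrary $Z$. Using $\calR(X,Z)=k(\mu(Z)\theta X-\mu(X)\theta Z)$, one obtains
\begin{equation*}
\langle\calR(X,Z),Y\rangle=k\bigl(\mu(Z)\langle\theta X,Y\rangle-\mu(X)\langle\theta Z,Y\rangle\bigr),
\end{equation*}
and a symmetric expression with $X,Y$ swapped. Summing and dividing by $2$, the two key rewrites are $\mu(Z)=\langle\theta^tU,Z\rangle$ (turning the first two terms into inner products against $Z$ of the vector $\langle\theta X,Y\rangle\theta^tU+\langle\theta Y,X\rangle\theta^tU$) and $\langle\theta Z,Y\rangle=\langle Z,\theta^tY\rangle$ (turning the remaining terms into $\langle Z,-\mu(X)\theta^tY-\mu(Y)\theta^tX\rangle$). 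Since $Z$ is arbitrary, the stated formula follows.

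There is no substantial obstacle here; the computation is routine once one is comfortable with the bookkeeping. The only delicate point, and the one I would double-check carefully, is the handling of $\theta$ versus $\theta^t$: $\theta$ vanishes on $V$ and sends $H$ isometrically to the vertical side (including the $U$ direction), while $\theta^t$ vanishes on $H$ and sends the vertical side back to $H$. Keeping the horizontal/vertical components of $X,Y,Z$ straight, and remembering that $\calR$ extracts the horizontal parts while landing in $V$, is what makes the two formulas come out in the claimed form.
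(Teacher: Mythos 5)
Your computation is correct and is exactly what the paper intends: it states the Proposition as the result of "simple computations" from the definitions $\calR(X,Y)=\pi^*R(X,Y)U$ and \eqref{AlcTM}, which is precisely your substitution of the constant-curvature tensor followed by the $\theta$/$\theta^t$ bookkeeping. Both parts check out, including the rewrites $\langle Y^h,U\rangle=\mu(Y)$, $\mu(Z)=\langle\theta^tU,Z\rangle$ and $\langle\theta Z,Y\rangle=\langle Z,\theta^tY\rangle$, so nothing further is needed.
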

We also omit the proof of the next formulas. These are the application of the general case treated in \cite[Proposition 2.2]{Alb2} to our situation with $\calR$ and $A$ given just previously.
\begin{prop}\label{dericovari}
For any $X\in TSM$ we have:
\begin{meuenumerate}
\item $\nag_X\theta^tU=\frac{2-k}{2}\theta^tX-\frac{k}{2}(\theta X-\mu(X)U)$
\item $\nag_X\vol=A_X\cdot\vol=\frac{k}{2}\bigl(\mu(X)\mu\wedge\alpha_2
 -(\theta X)^\flat\wedge\alpha_3-(X^\flat\circ\theta)\wedge \alpha_3\bigr)$
\item $\nag_X\alpha=\frac{k}{2}\bigl(\mu\wedge(\theta X)\lrcorner\alpha-\mu(X)\alpha_1\bigr)$
\item $\nag_X\mu =\frac{2-k}{2}X^\flat\circ\theta-\frac{k}{2}(\theta X)^\flat$
\item $\nag_X\dx\mu=\frac{k}{2}\mu\wedge\bigl({(X^h)}^\flat-{(X^v)}^\flat\bigr)$
\item $\nag_X\alpha_1=k\mu(X)\bigl(\frac{3}{2}\alpha-\alpha_2\bigr)+\mu\wedge\bigl(\frac{k-2}{2}X\lrcorner\alpha+\frac{k}{2}(\theta X)\lrcorner\alpha_1\bigr)$
\item $\nag_X\alpha_2=k\mu(X)\bigl(\alpha_1-\frac{3}{2}\alpha_3\bigr)+\mu\wedge\bigl(\frac{k-2}{2}X^v\lrcorner\alpha_1+\frac{k}{2}X\lrcorner\alpha_3\bigr)$
\item $\nag_X\alpha_3=\frac{2-k}{2}(\theta^tX)\lrcorner\vol+(\theta^tU)\lrcorner A_X\cdot\vol=\frac{2-k}{2}(\theta^tX)\lrcorner\vol+\frac{k}{2}\mu(X)\alpha_2$.
\end{meuenumerate}
\end{prop}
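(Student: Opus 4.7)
The plan is to apply the general formulas of \cite[Proposition 2.2]{Alb2}, which express $\nag$ acting on each of the canonical gwistor tensors in terms of the curvature-type tensor $\calR$ and the $H$-valued tensor $A$, to our concrete situation where $\calR=k\,\theta\wedge\mu$ and $A$ has the explicit expression of the preceding proposition. The whole argument then reduces to algebraic substitution combined with the fundamental identity \eqref{lcTM}.

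I would handle item (i) first, since it supplies the building block for many of the remaining identities. Because $\theta$ is $\pi^*\na^\LC$-parallel and $\pi^*\na^\LC_X U=X^v$, we have $\na^\star_X(\theta^tU)=\theta^tX^v=\theta^tX$, the correction term in \eqref{correcterm} being absent as $\theta^tU$ lies in $H$. Inserting this into \eqref{lcTM} gives
\begin{equation*}
\nag_X\theta^tU=\theta^tX-\tfrac{1}{2}\calR(X,\theta^tU)+A(X,\theta^tU).
\end{equation*}
Using $\theta\theta^tU=U$, $\mu(\theta^tU)=1$, $\theta^t\theta^tU=0$ (since $\theta^t$ vanishes on $H$), and the Sasaki orthogonalities $\langle\theta X,\theta^tU\rangle=0$ and $\langle U,X\rangle=0$ for $X\in TSM$, one finds $\calR(X,\theta^tU)=k(\theta X-\mu(X)U)$ and $A(X,\theta^tU)=-\tfrac{k}{2}\theta^tX$. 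Substituting and collecting terms yields (i).

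For items (ii)--(viii) the same mechanism applies but with $\nag$ used as a derivation of the tensor algebra. The forms $\vol,\mu,\alpha,\alpha_1,\alpha_2,\alpha_3$ and $\dx\mu$ are all built from $U$, $\theta^tU$, and the adapted frame $e^0,\ldots,e^6$; their $\na^\star$-derivatives vanish up to terms already accounted for by \eqref{correcterm}. One then differentiates each wedge product through, replaces the occurrences of $\nag_X e_i$ by the formula $-\tfrac{1}{2}\calR(X,e_i)+A(X,e_i)$ (supplemented by (i) whenever $e_0=\theta^tU$ is involved), and simplifies via the structure identities \eqref{bse1}. Item (iv) follows from (i) by dualising, item (v) from $\beta=-\dx\mu$, and items (ii), (iii), (vi), (vii), (viii) from expanding $\calR=k\,\theta\wedge\mu$ and $A$ on each slot of the corresponding 3- or 4-form.

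The main obstacle, such as it is, is purely bookkeeping: one must track the horizontal/vertical decomposition, the role of $U$ as ambient-vertical but not tangent to $SM$, and the interplay between $\theta$, $\theta^t$, and $\mu$. The clean form of the right-hand sides in (ii)--(viii) is a consequence of the constant sectional curvature assumption, which collapses $\calR$ to the single monomial $k\,\theta\wedge\mu$; the general Einstein case would produce uncancelled curvature terms and is precisely the reason the paper restricts attention to this setting.
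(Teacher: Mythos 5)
Your proposal follows exactly the route the paper intends: the paper omits the proof, stating that the formulas are the application of the general derivation formulas of \cite[Proposition 2.2]{Alb2} to the case $\calR=k\,\theta\wedge\mu$ and the explicit $A$, which is precisely your substitution scheme, and your worked-out item (i) (including $\calR(X,\theta^tU)=k(\theta X-\mu(X)U)$ and $A(X,\theta^tU)=-\tfrac{k}{2}\theta^tX$) is correct. So the attempt is correct and essentially the same approach, only more explicit than the paper's omitted proof.
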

We may now deduce:
\begin{equation*}
\begin{split}
 \nag_X\phi\ =\ \nag_X\alpha-\nag_X\mu\wedge\dx\mu-\mu\wedge\nag_X\dx\mu
-\nag_X\alpha_2\qquad\qquad\qquad\\
=\ \frac{k}{2}\mu\wedge(\theta X)\lrcorner\alpha-\frac{3k}{2}\mu(X)\alpha_1+
\bigl(\frac{k-2}{2}X^\flat\circ\theta+\frac{k}{2}(\theta X)^\flat\bigr)\wedge\dx\mu\\
+\frac{3k}{2}\mu(X)\alpha_3-\frac{k}{2}\mu\wedge X\lrcorner\alpha_3-\frac{k-2}{2}\mu\wedge X^v\lrcorner\alpha_1.\qquad
\end{split}
\end{equation*}
A computation confirms that $\nac\phi=0$ with $\nac=\nag+\frac{1}{2}T^\ch$ and $T^\ch$
given by \eqref{torsaocaracteristicadeespacoforma}.

We are now in position to compute $\nac T^\ch$.
\begin{teo}\label{Torsaocaracteristicaparalelaemespacosforma}
 Let $M$ be an oriented Riemannian 4-manifold of constant sectional curvature $k$. The characteristic connection $\nac$ of the associated gwistor space satisfies
\begin{eqnarray*}
 \nac_{X}T^\ch &=& k(k-1)X^v\lrcorner(\mu\wedge\alpha_1-\frac{1}{2}(\dx\mu)^2).
\end{eqnarray*}
In particular, $SM$ has parallel torsion if and only if $k=0$ or $k=1$.
\end{teo}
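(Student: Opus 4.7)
The plan is to compute $\nac_X T^\ch$ directly, using $\nac\phi=0$ (established just before the theorem) to halve the work.

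First, rewrite $T^\ch$ modulo $\phi$. From $\phi=\alpha-\mu\wedge\dx\mu-\alpha_2$ we obtain $\mu\wedge\dx\mu=\alpha-\phi-\alpha_2$, hence
\begin{equation*}
T^\ch=(3k-2)\alpha-k\phi-k\alpha_2.
\end{equation*}
Since $\nac\phi=0$, this gives $\nac_X T^\ch=(3k-2)\nac_X\alpha-k\nac_X\alpha_2$, so only the two covariant derivatives $\nac_X\alpha$ and $\nac_X\alpha_2$ remain to be computed.

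Second, I would assemble $\nac_X\alpha$ and $\nac_X\alpha_2$ by adding to items (iii) and (vii) of Proposition~\ref{dericovari} the torsion correction for a 3-form: the skew endomorphism $A_X\in\mathfrak{so}(TSM)$ associated to the 2-form
\begin{equation*}
\iota_X T^\ch=2(k-1)\iota_X\alpha+k\mu(X)\dx\mu-k\mu\wedge\iota_X\dx\mu
\end{equation*}
acts on 3-forms in the standard way by the derivation rule. I would expand in the adapted frame $e_0,\ldots,e_6$, split $X=X^h+X^v$, and simplify using the structure equations~\eqref{bse1}, the identities $\theta^tU=e_0$ and $\theta e_i=e_{i+3}$ for $i=1,2,3$, and the explicit contractions $\iota_{e_i}\alpha_j$ read off the definitions.

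Finally, the scalar coefficients $(3k-2)$ and $-k$ must recombine so that all parts linear in $k$ alone or in $(k-1)$ alone cancel, leaving only the single term $k(k-1)\,X^v\lrcorner(\mu\wedge\alpha_1-\tfrac{1}{2}(\dx\mu)^2)$. For the \emph{if and only if} statement one exhibits a single $X$ on which this 3-form is nonzero; for $X=e_4$ a quick contraction gives $-e^{026}+e^{035}-e^{152}-e^{163}$, so $\nac T^\ch=0$ forces $k(k-1)=0$, while the converse is immediate from the formula.

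The main obstacle is the bookkeeping in the second step: items (iii) and (vii) of Proposition~\ref{dericovari} both involve $\mu(X),\theta X,X^v$, contractions with several of the $\alpha_i$, and the torsion correction contributes further terms of the same shape. A useful sanity check is to verify the formula separately on $X=e_0$ (horizontal, $\mu(X)=1$) and $X=e_4$ (purely vertical), since these sample largely disjoint subsets of the terms. The clean $k(k-1)$-multiple presumably reflects that the non-parallel part of $T^\ch$ lives in a specific $G_2$-submodule among $\Lambda^3_1\oplus\Lambda^3_7\oplus\Lambda^3_{27}$, but such a structural argument is not needed for the direct calculation.
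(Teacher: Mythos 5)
Your proposal is correct in outline and is, at bottom, the same computation as the paper's: the paper likewise evaluates $\nac_X T^\ch=\nag_X T^\ch-\tfrac{1}{2}\sum_{j} T^\ch(X,\,\cdot\,,e_j)\wedge T^\ch(e_j,\,\cdot\,,\,\cdot\,)$ in the adapted frame, feeds in Proposition~\ref{dericovari}, and then checks horizontal and vertical directions separately (exactly your $e_0$/$e_4$ split). The one genuine difference is your preliminary rewriting $T^\ch=(3k-2)\alpha-k\phi-k\alpha_2$ combined with $\nac\phi=0$: this trades the Levi-Civita derivatives of $\mu$ and $\dx\mu$ (items (iv)--(v), which the paper uses) for that of $\alpha_2$ (item (vii)), and it absorbs the $\phi$-proportional part of the torsion correction for free, so it trims the bookkeeping slightly; the paper instead differentiates $T^\ch=2(k-1)\alpha+k\mu\wedge\dx\mu$ head-on and records the torsion correction explicitly as $-\tfrac{k^2}{2}\dx\mu(X,\,\cdot\,)\wedge\dx\mu+k(k-1)X\lrcorner(\mu\wedge\alpha_1)$, which is arguably just as short. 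Two cautions: the skew endomorphism entering your correction must be the one associated to $\tfrac{1}{2}\iota_X T^\ch$, not $\iota_X T^\ch$ (the factor $\tfrac{1}{2}$ in $\nac=\nag+\tfrac{1}{2}T^\ch$ matters, and your symbol $A_X$ clashes with the tensor $A$ of \eqref{AlcTM}); and the decisive cancellations are asserted rather than carried out, though that is the same level of detail the paper leaves to ``simple computations''. Your closing contraction $e_4\lrcorner\bigl(\mu\wedge\alpha_1-\tfrac{1}{2}(\dx\mu)^2\bigr)=-e^{026}+e^{035}-e^{152}-e^{163}$ is correct and does settle the ``if and only if'' statement.
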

\begin{proof}
For any direction $X\in TSM$ and using the cyclic sum in three vectors,
\begin{equation*}
\nac_XT^\ch = \nag_XT^\ch-\cyclic T^\ch(\dfrac{1}{2}T^\ch_X\ ,\ ,\ )=  \nag_XT^\ch-\frac{1}{2}\sum_{j=0}^6 T^\ch(X,\ ,e_j)\wedge T^\ch(e_j,\ ,\ ).
\end{equation*}
Since $T^\ch=2(k-1)\alpha+k\mu\wedge\dx\mu$, we get
\begin{eqnarray*}
\nac_XT^\ch &=& \nag_XT^\ch-\frac{k^2}{2}\dx\mu(X,\ )\wedge\dx\mu+
k(k-1)X\lrcorner(\mu\wedge\alpha_1).
\end{eqnarray*}
Now we have from Proposition \ref{dericovari}
\[ \nag_XT^\ch=(k-1)k\bigl(\mu\wedge(\theta X)\lrcorner\alpha 
-\mu(X)\alpha_1\bigr)-\bigl(k\frac{k-2}{2}X^\flat\circ\theta+ \frac{k^2}{2}(\theta
X)^\flat\bigr)\wedge\dx\mu \]
and then we see easily that $\nac_{X}T^\ch=0$ for $X\in H$. Taking a vertical direction $X$, the desired formula for the covariant derivative of $T^\ch$ is achieved.
\end{proof}

Let us now see the decompositions under $G_2$ representations referred in section \ref{Thecharconnec}, in the case under appreciation. Recall $\dx\phi$ has no $\Lambda^3_7$ component. Since $\delta\phi=0$, there are no $\Lambda^2_7,\Lambda^2_{14}$ components either. By results in \cite[Proposition 3.6]{Alb2}, the $\Lambda^3_1,\Lambda^3_{27}$ parts are
\[ \dx\phi=\frac{6}{7}(k+2)*\phi+ 
*\frac{1}{7}\bigl((15k-12)\alpha+(6k-2)\mu\wedge\dx\mu-(k+2)\alpha_2\bigr) . \]
Regarding $T^\ch$ it is coclosed. Now a characteristic connection with closed torsion is
called a \textit{strong $G_2$ with torsion}, denoted SG${}_2$T in \cite{ChiSwa}. We may
retain the following result.
\begin{prop}
 The gwistor space arising from constant curvature on $M$ has an SG${}_2$T connection if
and only if $k=0$.
\end{prop}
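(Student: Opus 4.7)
The plan is to unpack the definition of SG${}_2$T: the characteristic connection is SG${}_2$T precisely when its torsion is closed, i.e.\ $\dx T^\ch=0$. Since the proposition above gives the explicit formula $T^\ch=2(k-1)\alpha+k\mu\wedge\dx\mu$, the entire task reduces to computing $\dx T^\ch$ in the constant sectional curvature setting and determining when this 4-form vanishes.

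First I would apply $\dx$ termwise. The term $\dx(\mu\wedge\dx\mu)$ collapses to $(\dx\mu)^2$ since $\dx^2\mu=0$, so
\begin{equation*}
\dx T^\ch\ =\ 2(k-1)\dx\alpha+k(\dx\mu)^2.
\end{equation*}
Next I would substitute $\dx\alpha=\calR\alpha$ from \eqref{outrasderivadas} and then use the constant curvature identity $\calR\alpha=-k\mu\wedge\alpha_1$ established just before \eqref{torsaocaracteristicadeespacoforma}. This yields
\begin{equation*}
\dx T^\ch\ =\ -2k(k-1)\,\mu\wedge\alpha_1+k\,(\dx\mu)^2.
\end{equation*}

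At this stage the key observation is that the two 4-forms $\mu\wedge\alpha_1$ and $(\dx\mu)^2$ are linearly independent: reading off the expressions for $\mu,\alpha_1,\dx\mu$ in the adapted coframe, every monomial in $\mu\wedge\alpha_1$ contains $e^0$ while no monomial in $(\dx\mu)^2$ does. Consequently $\dx T^\ch=0$ forces each coefficient to vanish separately, i.e.\ $k(k-1)=0$ and $k=0$. The first equation allows $k\in\{0,1\}$, but the second is satisfied only by $k=0$; conversely $k=0$ makes $T^\ch=-2\alpha$, whose exterior derivative is $-2\calR\alpha=0$, confirming the equivalence.

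The calculation is short and there is no serious obstacle; the only point that warrants care is the linear independence argument, which I would justify by the $e^0$-grading as above rather than by an abstract $G_2$-representation decomposition. This approach also makes transparent that the obstruction to $\dx T^\ch=0$ at $k=1$ (where the torsion is $\nac$-parallel by Theorem \ref{Torsaocaracteristicaparalelaemespacosforma}) comes entirely from the $(\dx\mu)^2$ summand, which does not enjoy any cancellation in the constant curvature regime.
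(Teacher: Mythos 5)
Your computation is correct and is essentially the paper's own argument: the paper records exactly the same identity $\dx T^\ch=k(\dx\mu)^2-2k(k-1)\mu\wedge\alpha_1$ (obtained from $\dx\alpha=\calR\alpha=-k\mu\wedge\alpha_1$) immediately after the statement, from which the vanishing precisely at $k=0$ is read off. Your explicit linear-independence check via the $e^0$-grading and the converse verification for $k=0$ are sound and just make explicit what the paper leaves to inspection.
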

We have the decompositions $T^\ch=-\frac{k+2}{7}\phi+\tau_3^{T^\ch}$ and 
\[ \dx T^\ch=k(\dx\mu)^2-2k(k-1)\mu\wedge\alpha_1=\frac{6}{7}k(k-2)*\phi+*\tau_3^{\dx
T^\ch},\]
where $\tau_3^{T^\ch},\tau_3^{\dx T^\ch}$, sitting in
$\Lambda^3_{27}=\ker(\cdot\wedge\phi)\,\cap\,\ker(\cdot\wedge*\phi)$, are given by
\begin{equation*}
 \begin{split}
  \tau_3^{T^\ch} = \frac{1}{7}
\bigl((15k-12)\alpha+(6k-2)\mu\wedge\dx\mu-(k+2)\alpha_2\bigr),\\
\tau_3^{\dx T^\ch} =  \frac{2k}{7}
\bigl((6-3k)\alpha+(1+3k)\mu\wedge\dx\mu+(1-4k)\alpha_2\bigr).
 \end{split}
\end{equation*}

\section{Holonomy of the `parallel torsion'}

\subsection{Results on the Stiefel manifold $V_{l,2}$}
\label{TSmV_l2}

Theorem \ref{Torsaocaracteristicaparalelaemespacosforma} leads to the consideration of two
distinct cases. We start with $k=1$.

Since our results so far are local, we assume $M$ is simply-connected and complete. As it is well known, $SM$ with $M=S^4_1$, the radius 1 sphere, agrees with the Stiefel manifold $SO(5)/SO(3)=V_{5,2}$. Recall that transitivity of the action by isometries induced on the tangent sphere bundle of a Riemannian symmetric space is exclusive to all rank 1 spaces, cf. \cite[Proposition 10.80]{Besse}. In particular, in dimension 4, we are left with $S^4$, $\Proj^2(\C)$, the real hyperbolic space $H^4$ and the hyperbolic Hermitian space $\C H^2$.

We thus study briefly the space $V_{l,2}$, the unit tangent sphere bundle of $S^{l-1}$ with $l>2$. In the sequel, we let the name Stiefel manifold refer just to $V_{l,2}$ (with the index 2 fixed). Firstly, the Stiefel manifolds are simply-connected for $l\geq5$. The following results are due to Stiefel and to Borel, cf. \cite[Proposition 10.1]{Bor}:
\begin{equation*}
\begin{cases}
H^*(V_{l,2},\Z)=H^*(S^{l-1}\times S^{l-2},\Z)\qquad\mbox{if $l$ is even}\\
H^0(V_{l,2},\Z)=H^{2l-3}(V_{l,2},\Z)=\Z,\quad H^{l-1}(V_{l,2},\Z)=\Z_2\qquad\mbox{if $l$ is odd}\\
H^*(V_{l,2},\Z_2)=H^*(S^{l-1}\times S^{l-2},\Z_2)=\wedge\{x_{l-1},x_{l-2}\}.
\end{cases}
\end{equation*}
$\wedge$ stands for the free multiplicative exterior algebra generated on the given $x_j$ of degree $j$. We also have the additive isomorphism $H^*(V_{l,2},\Z_2)=H^*(S^{l-1},\Z_2)\otimes H^*(S^{l-2},\Z_2)$. Moreover, $V_{l,2}$ is a rational homology sphere for $l$ odd, cf. \cite{BoyGalNaka1}.
Now, we may deduce that 
\[  w(SS^{l-1})=\sum\pi^*w_i^2  \]
where $S^{l-1}$ is the base manifold and $\pi$ is the projection. There is a general formula in \cite{Alb4}. It is well known that $w(S^k)=\sum_{i\geq0} w_i=1$ for all $k$. Hence the following result for which we do not know a reference.
\begin{prop}\label{classesSWdeVl2}
The total Stiefel-Whitney class of $V_{l,2}$ is 1. In particular, this space is orientable and admits a spin structure.
\end{prop}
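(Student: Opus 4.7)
The plan is to invoke the general formula $w(SS^{l-1}) = \sum_i \pi^* w_i^2$ recalled just above the statement (with a reference to \cite{Alb4}), where $w_i = w_i(S^{l-1})$ denotes the $i$-th Stiefel-Whitney class of the base. Since the right-hand side is a polynomial in the pullbacks of the $w_i$ of $S^{l-1}$, the computation of $w(V_{l,2})$ reduces entirely to the knowledge of $w(S^{l-1})$.

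Next I recall that any sphere $S^k$ has stably trivial tangent bundle: the sum $TS^k \oplus \nu$ with the trivial normal bundle $\nu$ in $\R^{k+1}$ is itself trivial. Hence all Stiefel-Whitney classes of $S^k$ vanish in positive degree, so $w_0(S^{l-1}) = 1$ and $w_i(S^{l-1}) = 0$ for every $i \geq 1$. Plugging this into the displayed formula, only the $i=0$ summand survives, giving $w(V_{l,2}) = \pi^* 1 = 1$ as asserted.

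The two stated consequences then follow from standard facts on characteristic classes: $w_1(V_{l,2}) = 0$ is the orientability criterion, and on an orientable manifold the obstruction to the existence of a spin structure is precisely $w_2$, which here vanishes as well.

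I do not anticipate any genuine obstacle here; the argument is a one-line substitution once the formula $w(SS^{l-1}) = \sum \pi^* w_i^2$ is granted. The only delicate point would be proving that formula itself, but since it is already stated in the text and attributed to \cite{Alb4}, I would simply cite it and let the triviality of $w(S^{l-1})$ do the rest of the work.
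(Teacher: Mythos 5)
Your argument is exactly the paper's: the text deduces $w(V_{l,2})=1$ by combining the formula $w(SS^{l-1})=\sum\pi^*w_i^2$ cited from \cite{Alb4} with the well-known fact that $w(S^k)=1$ (stable triviality of $TS^k$), and then reads off orientability and the spin condition from the vanishing of $w_1$ and $w_2$. Your proposal is correct and follows the same route, with the stable-triviality justification made slightly more explicit.
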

Now, regarding the Riemannian structure from a slightly general picture, let us see how we are driven to $V_{l,2}=SS^{l-1}$ with the metric induced from the Sasaki metric of the tangent bundle, cf. section \ref{Tiogs}.

First we recall from \cite{Agri,BoyGalNaka1,BoyGalMatzeu,FriIva1} what is the natural geometric notion concerned with a Riemannian reduction from the Lie group $SO(2n+1)$ to the structure group $U(n)$. A metric almost contact manifold consists of a Riemannian manifold $({\cal S},\tilde{g})$ together with a 1-form $\eta$, a vector field $\xi$ and an endomorphism $\varphi\in\Gamma(\End{T\cal S})$ satisfying the relations: $\forall X,Y\in T\cal S$
\begin{equation*}
\begin{split}
\eta(\xi)=1,\qquad\varphi^2=-1+\eta\otimes\xi,\hspace{2.2cm}\\
\tilde{g}(\varphi X,\varphi Y)=\tilde{g}(X,Y)-\eta(X)\eta(Y),\qquad\varphi(\xi)=0.
\end{split}
\end{equation*}
If furthermore $\dx\eta=2F$, where $F(X,Y)=\tilde{g}(X,\varphi Y)$, then we have a metric contact structure. If the CR-structure defined by the distribution $\cal D=\ker\eta$ is integrable, then we have a so called normal contact structure. The integrability condition is the vanishing of a certain Nijenhuis tensor of the almost complex structure $J=\varphi_{|\cal D}$. If $\xi$ is a Killing vector field, i.e. ${\cal L}_\xi\tilde{g}=0$, then we say we have a K-contact structure. Since on a contact structure we have ${\cal L}_\xi F=0$, the K-contact equation is assured equivalently by ${\cal L}_\xi\varphi=0$. A normal K-contact structure is known as a Sasakian structure; then $\cal S$ is called a Sasakian manifold.

The K-contact condition is equivalent to $\nag_X\xi=-\varphi(X),\ \forall X\in T\cal S$. A K-contact structure is normal (and thence the manifold is Sasakian) if furthermore (cf. \cite{FriKath})
\begin{equation}\label{Sasakianequation}
(\nag_X\varphi)(Y)=\tilde{g}(X,Y)\xi-\eta(Y)X.
\end{equation}

Now let $M$ be a Riemannian manifold of dimension $m=n+1$.  Y. Tashiro has shown the unit
tangent sphere bundle $SM$ (of dimension $2n+1$) has a metric contact structure. It is
given, in present notation, by $\tilde{g}=\frac{1}{4}g$, $\eta=\frac{1}{2}\mu,\
\xi=2\theta^tU$ and $\varphi=\theta-U\mu-\theta^t$. Notice $g$ is the Sasaki metric and
$\theta$ is the map in \eqref{aplicacaoteta}. We have, by \eqref{outrasderivadas} easily
generalized to any dimension,
\[ F(X,Y):=\frac{1}{4}g(X,\varphi Y)=\frac{1}{4}(\langle X,\theta Y\rangle-\langle\theta
X,Y\rangle)=\frac{1}{4}\dx\mu(X,Y),\]
so $\dx\eta=2F$ as expected. Tashiro also proved the following \cite[Theorem 9.3]{Blair}:
the contact metric structure on $SM$ is a K-contact structure if and only if $(M,g)$ has
constant sectional curvature 1. And then deduces $SM$ is Sasakian. The proof goes as
follows: notice $\nag=\na^{\tilde{g}}$ is given in \eqref{lcTM}. Then we find
\[  \langle\nag_X\xi,Y^v\rangle=-\frac{1}{2}\langle\calR_{X,\xi},Y^v\rangle=
-\langle R_{X^h,\theta^tU}U,Y^v\rangle.  \]
So, just looking at the vertical part of the equation $\nag_X\xi=-\varphi(X)$, on the base
manifold it reads $\langle R(X,u)u,Y\rangle=\langle X,Y\rangle,\ \forall X,Y\in TM\cap
u^\perp$. Clearly, this means constant sectional curvature 1. The horizontal part of the
equation gives the same result. The reciprocal is also easy, and the Sasakian condition
follows. Moreover, in this case the Sasakian equation \eqref{Sasakianequation}) alone
implies the round curvature 1.

A contact manifold $({\cal S},\tilde{g},\eta,\xi,\varphi)$ is said to be $\eta$-Einstein if its Ricci tensor can be written as $\ric_{\tilde{g}}(X,Y)=\lambda\tilde{g}+\nu\eta\otimes\eta$ with $\lambda,\nu$ constants (cf. \cite{BoyGalMatzeu,Oku}).

We compute, with methods as found in \cite{AbaKow}, that the contact manifold $(SM,\tilde{g},\eta,\xi,\varphi)$ verifies equalities
\begin{equation}\label{ricci_g}
\begin{split}
\ric_{\tilde{g}}(X,Y)=\ric_g(X,Y)=  \hspace{4.5cm}\\
=((m-1)k-\frac{k^2}{2})\langle X^h,Y^h\rangle+(m-2+\frac{k^2}{2})\langle X^v,Y^v\rangle+\frac{k^2}{2}(2-m)\mu(X)\mu(Y)
\end{split}
\end{equation}
if $M$ has constant sectional curvature $k$.
\begin{prop}[\cite{ChaiChunParkSek}]
Assuming constant sectional curvature $k$, the contact manifold $SM$ is $\eta$-Einstein if and only if $k=1$ or $k=m-2$.
\end{prop}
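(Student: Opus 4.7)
The plan is to match coefficients in the $\eta$-Einstein equation $\ric_{\tilde g}=\lambda\tilde g+\nu\,\eta\otimes\eta$ using the explicit Ricci formula \eqref{ricci_g}. Since $\tilde g=\tfrac{1}{4}g$, $\eta=\tfrac{1}{2}\mu$, and $\mu$ is a horizontal $1$-form annihilating vertical vectors (in the adapted frame $\mu=e^0$), both sides of the equation decompose cleanly under the splitting $TSM=H\oplus V$: they vanish on horizontal–vertical mixed pairs, so it suffices to test the identity on purely horizontal and on purely vertical pairs.

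Evaluating on a vertical pair $(X^v,Y^v)$, the formula \eqref{ricci_g} contributes $\bigl(m-2+\tfrac{k^2}{2}\bigr)\langle X^v,Y^v\rangle$ while the right-hand side reduces to $\tfrac{\lambda}{4}\langle X^v,Y^v\rangle$, forcing $\lambda=4(m-2)+2k^2$. Matching the coefficient of $\mu(X)\mu(Y)$ on the horizontal block similarly fixes $\nu=2k^2(2-m)$. Both are genuine constants as $k$ and $m$ are. The remaining constraint, imposed by the $\langle X^h,Y^h\rangle$ coefficient, reads
\[(m-1)k-\tfrac{k^2}{2}=\tfrac{\lambda}{4}=m-2+\tfrac{k^2}{2},\]
equivalently $k^2-(m-1)k+(m-2)=0$, whose discriminant is $(m-1)^2-4(m-2)=(m-3)^2$. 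Hence $k=\tfrac{(m-1)\pm(m-3)}{2}\in\{1,\,m-2\}$.

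For the converse, substituting each of $k=1$ and $k=m-2$ into \eqref{ricci_g} recovers the shape $\lambda\tilde g+\nu\eta\otimes\eta$ with the constants just computed, so both values do yield $\eta$-Einstein structures. In short, the whole argument is a decomposition according to $H\oplus V$, a coefficient comparison, and a quadratic in $k$; there is no essential obstacle once the Ricci formula \eqref{ricci_g} is in hand. The only slightly delicate point is recognising that $\mu(X)\mu(Y)$ contributes only on the horizontal block, which is immediate from $\mu=e^0$ and guarantees that $\lambda$ is determined unambiguously from the vertical block alone.
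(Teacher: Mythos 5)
Your proposal is correct and follows exactly the route the paper intends: the proposition is read off from the computed Ricci formula \eqref{ricci_g} by comparing coefficients in $\ric_{\tilde g}=\lambda\tilde g+\nu\,\eta\otimes\eta$ (using $\tilde g=\tfrac14 g$, $\eta=\tfrac12\mu$ and the fact that $\mu=e^0$ is horizontal), leading to the quadratic $k^2-(m-1)k+(m-2)=0$ with roots $k=1$ and $k=m-2$. Your constants are also consistent with the paper's check $\lambda+\nu=2n$ in the Sasakian case $k=1$, so there is nothing to add.
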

This result was also deduced by \cite{ChaiChunParkSek}. In the Sasakian case $k=1$ notice the formula $\lambda+\nu=2n$, as theoretically expected (\cite[Lemma 7]{BoyGalMatzeu}).

In \cite{FriIva1,Oku} we have the notion of contact connection on a contact manifold, i.e. a linear connection on $\cal S$ such that
\begin{equation*}
\na\tilde{g}=0,\qquad\na\eta=0,\qquad\na\varphi=0.
\end{equation*}
\cite[Theorem 8.4, case 1]{FriIva1} guarantees that any Sasakian manifold admits a contact connection with totally skew-symmetric torsion given by
\begin{equation}\label{TorsaoContactoSasakian}
 T=\eta\wedge\dx\eta . 
\end{equation}
Moreover, $T$ is parallel for such $\na=\nag+\frac{1}{2}T$, which is unique ---  so it is called \textit{the} characteristic connection of the normal contact structure. In general, cf. \cite[Theorem 8.2]{FriIva1}, this contact connection with skew-symmetric torsion exists if and only if the Nijenhuis tensor is skew-symmetric and $\xi$ is a Killing vector field.

In sum, Tashiro's results on $SM$ led us to the case of integrable geometries, the
homogeneous Sasakian space $V_{l,2}$, where $l=m+1=n+2$, with metric $\frac{1}{4}g$ and
Ricci curvature tensor $\ric_g=(m-\frac{3}{2})g+\frac{2-m}{2}\mu\otimes\mu$. This space
admits a characteristic contact connection ($T$ is the same viewed as a $(2,1)$-tensor)
$\na=\nag+\frac{1}{2}\mu\wedge\dx\mu$.
And there is no simply-connected Riemannian manifold besides $S^4$ whose unit tangent sphere bundle admits a characteristic contact connection. For the existence assures the manifold is K-contact.

To complete the picture, the characteristic foliation ${\cal F}_\xi$ determined by $\xi$, hence with 1 dimensional leaves, gives a projection onto the Grassmannian of oriented 2-planes in $\R^l$, a complex quadric, $V_{l,2}\rr{\tilde{\mathrm{Gr}}}_{l,2}$.

Starting from a K\"ahler-Einstein manifold $(X^{2n},\overline{g},\overline{J})$ of scalar curvature $4n(n+1)$, it is  shown in \cite[pag. 83]{BFKG} how to construct Einstein-Sasakian metrics on an associated $S^1$-bundle $\pi:{\cal S}\rr X^{2n}$: the bundle whose first Chern class is $c_1=\frac{1}{A}c_1(X^{2n})$ where $A$ is the maximal integer such that $\frac{1}{A}c_1(X^{2n})$ is an integral cohomology class. Moreover, $\cal S$ is simply connected and admits a spin structure (cf. Proposition \ref{classesSWdeVl2}). The 1-form $\eta$ is induced by the associated $U(1)$-connection, so that $\dx\eta$ is essentially the K\"ahler form of $X^{2n}$.

The example of the Stiefel manifold is already mentioned in \cite{BFKG}, as noticed by \cite{BoyGalNaka1}.

\subsection{Holonomy of the characteristic contact connection of $V_{l,2}$}

We may now continue our study of the gwistor space of the 4-sphere with the canonical Sasaki metric.
\begin{prop}\label{characteristiccoonection}
The characteristic connection $\nac$ of the $G_2$-twistor space $(V_{5,2},g,\phi)$ is
given by the torsion $T^\ch=\mu\wedge\dx\mu$ and its holonomy is contained in $SU(3)$.
Thus coincides with the contact metric connection. The torsion is parallel.
\end{prop}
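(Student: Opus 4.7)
The plan is to specialize the general results of the previous subsection to $k=1$ and then identify the resulting characteristic connection with the Sasakian contact characteristic connection already discussed just before the Proposition.

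First, I would read off the torsion directly from \eqref{torsaocaracteristicadeespacoforma}: setting $k=1$ kills the coefficient $2(k-1)$ in front of $\alpha$, leaving $T^\ch = \mu\wedge\dx\mu$. Parallelness is immediate from Theorem \ref{Torsaocaracteristicaparalelaemespacosforma}, since the factor $k(k-1)$ on the right-hand side of $\nac_X T^\ch$ vanishes, giving $\nac T^\ch=0$.

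Next, I would identify $\nac$ with the contact characteristic connection. By Tashiro's theorem recalled in the preceding subsection, the unit tangent sphere bundle of the round $S^4$ is Sasakian, with $\tilde g=\tfrac14 g$, $\eta=\tfrac12\mu$ and $\xi=2\theta^tU$; by \cite[Theorem 8.4, case 1]{FriIva1} its contact characteristic connection has $(3,0)$-torsion $T=\eta\wedge\dx\eta$ as in \eqref{TorsaoContactoSasakian}, which viewed as a $(2,1)$-tensor using $\tilde g$ is precisely $\mu\wedge\dx\mu$, as already observed a few lines above the Proposition. Thus both connections take the form $\nag+\tfrac12(\mu\wedge\dx\mu)$ and therefore coincide; the uniqueness of a metric connection with prescribed totally skew torsion confirms this identification, and in particular $\nac$ preserves the whole Sasakian package $(\tilde g,\eta,\xi,\varphi)$, not just $\phi$.

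Finally, for the holonomy, I would argue as follows. Since $\nac$ is the contact characteristic connection, $\nac\xi=0$, so the holonomy representation fixes the unit vector $\xi$ pointwise. By definition $\nac\phi=0$ as well, so the holonomy also lies in $G_2\subset SO(7)$. Therefore $\mathrm{Hol}(\nac)$ is contained in the intersection inside $SO(7)$ of $G_2$ with the stabilizer of a unit vector in $\R^7$. Because $G_2$ acts transitively on $S^6\subset\R^7$ with isotropy $SU(3)$, this intersection is exactly $SU(3)$, giving $\mathrm{Hol}(\nac)\subseteq SU(3)$.

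The main subtlety I anticipate is the bookkeeping of the rescaling $\tilde g=g/4$ when passing between the $(3,0)$-torsion $\eta\wedge\dx\eta$ of the Sasakian formula and the $(2,1)$-torsion used in $\nac=\nag+\tfrac12 T^\ch$; this rescaling is tabulated in the paragraph preceding the Proposition, so matching the two expressions is the only point at which one can easily make a numerical mistake, everything else being a clean specialization of previously established formulas plus the standard representation-theoretic fact $\mathrm{Stab}_{G_2}(v)=SU(3)$.
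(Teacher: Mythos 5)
Your argument is correct, but it runs in the opposite direction from the paper's proof. The paper starts on the Sasakian side: it takes the contact characteristic connection $\na=\nag+\frac{1}{2}\mu\wedge\dx\mu$ with its parallel torsion from \eqref{TorsaoContactoSasakian}, and then shows by direct computation (using Proposition \ref{dericovari}) that $\alpha$ is $\na$-parallel, hence so is $\alpha_2=\frac{1}{2}\alpha\circ(\varphi\wedge\varphi\wedge 1)$ because $\na\varphi=0$; this gives $\na\phi=0$, and the identification $\na=\nac$ follows from the uniqueness of the characteristic connection of the $G_2$ structure, with the $SU(3)$ containment read off from the explicit list of parallel tensors ($\tilde g,\xi,\varphi,\alpha$). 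You instead specialize the already-established torsion formula \eqref{torsaocaracteristicadeespacoforma} to $k=1$, identify the two connections by uniqueness of the metric connection with a prescribed totally skew torsion (handling the $\tilde g=\frac{1}{4}g$ rescaling correctly, exactly as the paper tabulates it just before the Proposition), obtain parallel torsion from Theorem \ref{Torsaocaracteristicaparalelaemespacosforma} at $k=1$, and get the holonomy from $\nac\xi=0$, $\nac\phi=0$ and the standard fact that the stabilizer in $G_2$ of a nonzero vector is $SU(3)$. Both routes are sound: yours is shorter given the constant-curvature computations already in the paper (and the paper itself notes, right after its proof, that $T^\ch$ agrees with \eqref{torsaocaracteristicadeespacoforma} at $k=1$, so your starting point is legitimate), whereas the paper's proof is independent of that computation and exhibits concretely which tensors are $\nac$-parallel, which is what makes the $SU(3)\subset G_2$ reduction visible without invoking the transitive action of $G_2$ on $S^6$.
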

\begin{proof}
By the results given in \eqref{TorsaoContactoSasakian} we find a contact connection with
skew-symmetric torsion $T=\mu\wedge\dx\mu$ (contracting now with the metric $g$).
Additionally we have that $T$ is $\na$-parallel. We remark that $\mu\wedge\na\dx\mu=0$.
Computing $\na\alpha=(\nag+\frac{1}{2}T)\alpha$, applying Proposition \ref{dericovari} and
the usual technique, we find $\alpha$ is parallel. Since $\na\varphi=0$ and
\[ \alpha_2=\frac{1}{2}\alpha\circ(\theta\wedge\theta\wedge1)=
  \frac{1}{2}\alpha\circ(\varphi\wedge\varphi\wedge1) , \]
(cf. \cite{Alb2} for this notation and remarks on differentiation) we get
\[ \na\alpha_2=0 . \]
Hence $\na\phi=\na(\alpha-\mu\wedge\dx\mu-\alpha_2)=0$ and therefore the (unique)
$SU(3)\subset G_2$ connection $\na$ with totally skew-symmetric torsion is the
characteristic connection of the gwistor structure, $\na=\nac$.
\end{proof}
Of course $T^\ch$ above agrees with the result found in
\eqref{torsaocaracteristicadeespacoforma} for sectional curvature 1.

Now, the formulas from \cite{FriIva1} for the curvature of the characteristic connection
are combined with the Riemannian curvature. So it is important to recall the references on
the latter. There is a long literature on results about the sectional, Ricci and scalar
curvatures of the Sasaki metric on the tangent sphere bundle of any given Riemannian
manifold. The techniques are those from e.g. \cite{AbaKow} and several other references
therein, where Einstein metrics are found (interesting enough, for $V_{l,2}$ we also have
$SO(l)$-invariant Einstein metrics given in \cite{Arva}, which use the method below and
recur to results of Wang). 

We follow homogeneous space theory to determine the holonomy of the characteristic
connection.

Let $n,m$ be integers such that $l=m+1=n+2$ (as in section \ref{TSmV_l2}), let $K=SO(l),\ H=SO(n),\ \g=\sol(l), \h=\sol(n)$. Now we consider the trivial embedding $H\subset K$. So we may decompose $\g=\h\oplus\m$ with $\m$ the subspace of matrices having 0 where $\h$ falls. Since $[\h,\m]\subset\m$ and $H$ is connected, we have a reductive homogeneous space $V_{l,2}=K/H$. Then the tangent vector bundle of $K/H$ arises from the canonical principal $H$-bundle, associated to $\m$. Let $D_{ij}$ be the matrix with 0 everywhere except in position $(i,j)$ where it has a 1. We have a canonical basis of $\g$ given by
\[ E_{ij}=D_{ij}-D_{ji},\qquad1\leq i<j\leq l .   \]
The vectors $e_{0}=E_{m,l}$ and $e_i=E_{i,l},\ e_{i+n}=E_{i,m},\ \,1\leq i\leq n$
constitute a basis of $\m$, which we may take to be an orthonormal basis of a
$K$-invariant Riemannian metric, cf. \cite{Kath1,WangZil}. Compare also with formula
\eqref{baseadaptada}, i.e. the adapted frame of $G_2$-twistor space.

We recall the canonical connection $\na$ of $K/H$ is given by $\na_{e_a}e_b=0,\ \forall a,b$ such that $0\leq a,b\leq 2n+1$. Its torsion satisfies $T^\na(X,Y)=-[X,Y]_\m$, where the index denotes the component in $\m$, cf. \cite{KobNomi}.

The new metric corresponds with the Sasaki metric of $SS^m$ introduced in section \ref{Tiogs} and generalised to any dimension. Indeed, the embedding $SO(n)\subset SO(m)\subset SO(l)$ induces the respective decomposition of $\h$, to which the Levi-Civita connection of the sphere corresponds. The horizontal and vertical subspace decomposition is clear.
\begin{teo}
The characteristic contact connection $\nac=\nag+\frac{1}{2}\mu\wedge\dx\mu$ on $V_{l,2}$
coincides with the invariant canonical connection. Moreover, $\nac$ is complete and its
holonomy group is $SO(n)$.
\end{teo}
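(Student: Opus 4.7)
The strategy is to match the two connections via their torsions, then read off the holonomy from the Lie-theoretic description of the invariant canonical connection.

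The first step is to compute the brackets of the basis of $\m$. Using the standard formula $[E_{ab},E_{cd}]=\delta_{bc}E_{ad}+\delta_{ad}E_{bc}-\delta_{bd}E_{ac}-\delta_{ac}E_{bd}$ in $\sol(l)$, a direct calculation with $e_0=E_{n+1,n+2}$, $e_i=E_{i,n+2}$ and $e_{i+n}=E_{i,n+1}$ gives
\[ [e_0,e_i]=e_{i+n},\quad [e_0,e_{i+n}]=-e_i,\quad [e_i,e_{j+n}]=\delta_{ij}e_0, \]
while $[e_i,e_j]=[e_{i+n},e_{j+n}]=-E_{ij}\in\h$ for $1\leq i,j\leq n$. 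Taking $\m$-components yields the torsion $T^\na(X,Y)=-[X,Y]_\m$ of the canonical connection. One then converts this into a skew $3$-form via the Sasaki metric, for which the $e_a$'s are orthonormal in agreement with the adapted frame of section \ref{Tiogs} (under the identification $e_{i+n}=\theta e_i$ at the origin), and verifies that it coincides with $\mu\wedge\dx\mu$, since $\mu=e^0$ and $\dx\mu=\sum_{i=1}^n e^{i+n}\wedge e^i$. Both $\na$ and $\nac$ are thus metric connections on the Sasakian space $V_{l,2}$ carrying the same totally skew-symmetric torsion; the uniqueness statement recalled in Proposition \ref{characteristiccoonection} then forces $\na=\nac$.

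For the holonomy, I would apply the Ambrose--Singer theorem to the canonical connection, whose curvature at the origin is $R^\na(X,Y)=-\mathrm{ad}([X,Y]_\h)|_\m$ for $X,Y\in\m$. Its holonomy algebra is therefore the Lie subalgebra of $\h=\sol(n)$ generated by the projections $[\m,\m]_\h$; from the brackets above, already $[e_i,e_j]_\h=-E_{ij}$ for $1\leq i<j\leq n$ exhausts a basis of $\sol(n)$. Hence $\mathfrak{hol}(\nac)=\sol(n)$, and since $V_{l,2}$ is connected the corresponding holonomy group is the full connected $SO(n)$. Completeness comes for free from homogeneous-space theory: geodesics of the canonical connection through the origin are the orbits $t\mapsto\exp(tX)\cdot o$ with $X\in\m$, defined on all of $\R$, and $K$-translation extends this to every point.

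The main obstacle is purely bookkeeping: carefully verifying the correspondence between the Lie-algebraic basis of $\m$ and the adapted frame of $G_2$-twistor space --- in particular the identification $e_{i+n}=\theta e_i$ --- and confirming that the $K$-invariant metric making the basis orthonormal agrees, up to a global constant, with the Sasaki metric (a homothety alters neither the connection nor its holonomy). Once these identifications are secured, the whole argument reduces to the elementary commutator computations in $\sol(l)$ sketched above.
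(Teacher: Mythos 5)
Your argument is correct, but it reaches the identification $\na=\nac$ by a different mechanism than the paper. You compute the brackets of the chosen basis of $\m$ and match torsions directly, using the identity $\mu\wedge\dx\mu(X,Y,Z)=-\langle[X,Y],Z\rangle$ --- precisely the identity the paper only records as a remark \emph{after} the theorem; once the canonical connection is known to be metric (immediate, since the invariant metric is parallel for it) and its torsion $3$-form is $\mu\wedge\dx\mu$, the coincidence follows simply because a metric connection is uniquely determined by its torsion, i.e.\ it must equal $\nag+\frac{1}{2}\mu\wedge\dx\mu$. Note that this is the uniqueness you actually need; the uniqueness of the \emph{characteristic} connection invoked from Proposition \ref{characteristiccoonection} would additionally require checking that the canonical connection parallelizes the contact ($G_2$) structure, which you never verify --- the paper's route does exactly that, arguing that all the $K$-invariant tensors $g,\mu,\dx\mu,\varphi,\xi,\alpha$ are parallel for the canonical connection, that its torsion is totally skew, and then citing uniqueness; your route trades that structural argument for explicit commutator bookkeeping, which in return exhibits the torsion identity and feeds directly into the holonomy computation. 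For the holonomy the paper defers entirely to Kobayashi--Nomizu, whereas you reprove the needed fact via Ambrose--Singer; this works, and since the curvature of the canonical connection is parallel the holonomy algebra is even just the \emph{span} of the endomorphisms ${\mathrm{ad}}([X,Y]_\h)|_{\m}$, and your brackets show $[\m,\m]_\h=\h=\sol(n)$ acting faithfully on $\m$, so $\mathfrak{hol}=\sol(n)$; completeness via $t\mapsto\exp(tX)\cdot o$ is the standard statement the paper also cites. Two points to tighten: to pass from the holonomy algebra to the full group $SO(n)$ you need $V_{l,2}$ simply connected (true for $l\geq 4$, in particular for the gwistor case $l=5$), not merely connected, since connectedness of the manifold does not exclude a disconnected holonomy group; and the normalization caveat you flag should be handled at the level of the $(2,1)$-torsion, because rescaling $g$ leaves the connection unchanged but rescales the torsion $3$-form --- the paper settles this by asserting the invariant metric with orthonormal $e_a$ \emph{is} the Sasaki metric.
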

\begin{proof}
Here we refer just to Chapter X of \cite[Volume II]{KobNomi}. First recall from
\cite[Proposition 2.7]{KobNomi} that every $K$-invariant tensor is parallel for the
(invariant) canonical connection. By the way they were defined, the tensors
$g,\alpha,\theta,\varphi,\mu,\dx\mu,\xi$ are all clearly $K$-invariant. Also the torsion
$T^\na(X,Y,Z)=-g([X,Y],Z)=g(Y,[X,Z])$ is totally skew-symmetric. Hence the result follows
by uniqueness of the characteristic connection. The theory says the canonical connection
$\na$ is complete and what its holonomy Lie subalgebra is.
\end{proof}
Interesting enough, one may check the identity on a triple of vectors on $\m$
\begin{equation*}
 \mu\wedge\dx\mu(X,Y,Z)=-\langle[X,Y],Z\rangle.
\end{equation*}

Finally, we return to gwistor space.
\begin{coro}
The holonomy of the characteristic $G_2$-connection on the unit sphere bundle of the
4-sphere $(V_{5,2})$ is complete and equal to $SO(3)$.
\end{coro}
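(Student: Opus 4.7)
The plan is to combine the two immediately preceding results and specialise them to the four-dimensional base case. Proposition \ref{characteristiccoonection} identifies the characteristic $G_2$-connection $\nac$ of the gwistor space of $S^4$ with the characteristic contact connection of the induced Sasakian structure on $V_{5,2}$, both having skew-symmetric torsion $T^\ch=\mu\wedge\dx\mu$. The Theorem in turn identifies, for every $V_{l,2}$ with $l=n+2$, this characteristic contact connection with the $K$-invariant canonical connection of the reductive presentation $V_{l,2}=SO(l)/SO(n)$, and records that this canonical connection is complete with holonomy group $SO(n)$.

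Accordingly, I would first fix the numerical data of the gwistor setting. The base is $M=S^4$, so $m=\dim M=4$; consequently $l=m+1=5$ and $n=l-2=3$, giving $V_{5,2}=SO(5)/SO(3)$. Substituting these values into the Theorem yields that the characteristic contact connection on $V_{5,2}$ is complete and has holonomy $SO(3)$. Next I would invoke Proposition \ref{characteristiccoonection} to transport both properties to $\nac$ itself, since the two connections literally coincide as metric connections with the same totally skew-symmetric torsion (uniqueness of the characteristic connection once it exists is the logical hinge here, and it was already used in the proof of Proposition \ref{characteristiccoonection}).

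The argument is essentially just the intersection of the two earlier statements applied at $l=5$, so there is no serious obstacle. The one point worth a sanity check is the compatibility of the two independent descriptions of $\nac$: the $SO(3)$ obtained from the homogeneous-space picture should respect the $SU(3)\subset G_2$ reduction established in Proposition \ref{characteristiccoonection}. This holds via the standard real-form inclusions $SO(3)\hookrightarrow SU(3)\hookrightarrow G_2$, which is a reassuring cross-check that the contact-geometric and the homogeneous-reductive viewpoints on $\nac$ are mutually consistent and together pin down the holonomy exactly, rather than only up to inclusion.
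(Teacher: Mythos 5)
Your proposal is correct and follows essentially the same route as the paper: the corollary is stated there as an immediate consequence of Proposition \ref{characteristiccoonection} together with the Theorem on the canonical connection of $V_{l,2}=SO(l)/SO(n)$, specialised to $l=5$, $n=3$, exactly as you do. Your closing remark on the compatibility of the two reductions via $\sol(3)\subset\sul(3)\subset\g_2$ is consistent with the paper's subsequent discussion of Friedrich's classification.
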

This allows us to look for the classification of the $G_2$-twistor
space $V_{5,2}$ with characteristic holonomy algebra $\mathfrak{hol}(\nac)\subset\g_2$,
corresponding to a $G_2$-connection with parallel skew-symmetric torsion, as described in
\cite{Fri1}. We arrive precisely to the case of \cite[Theorem 7.1]{Fri1} (with a certain
$c$ in that reference equal to $1/7$), which comes form the Lie subalgebra
$\sol(3)\subset\sul(3)\subset\g_2$.

The characteristic curvature tensor is given by $R^\ch(X,Y)Z=-[[X,Y]_\h,Z]$ or by
\begin{equation*}
R^\ch=-\frac{1}{2}(2S_1\otimes S_1+S_2\otimes S_2+S_3\otimes S_3)
\end{equation*}
as results from \cite{Fri1} (or \cite{KobNomi}), with the $S_i$ being generators of
$\sol(3)\subset\g_2$. Formulas for $\ric_g$ found in \eqref{ricci_g} match precisely with
those given in the new reference.

\subsection{Holonomy of the characteristic connection of $S\R^4$}

The characteristic connection on the gwistor space of a flat 4-dimensional space
also has parallel torsion; harmonic as proved earlier. The $G_2$-twistor
structure verifies $\dx\phi=-(\dx\mu)^2-2\mu\wedge\alpha_1$.
The simply-connected germ of such gwistor, say $SM=\R^4\times S^3$, was described in
\cite{Alb2} with canonical coordinates, inducing a trivial framing. Recall from
\eqref{torsaocaracteristicadeespacoforma} that the torsion of the characteristic $G_2$
metric connection is $T^\ch=-2\alpha$.
\begin{prop}
Let $M$ be an oriented flat Riemannian 4-manifold. Then the characteristic $G_2$
connection $\nac=\nag-\alpha$ on the gwistor space $SM$ is flat.
\end{prop}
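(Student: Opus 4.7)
The plan is to identify the gwistor space of a flat 4-manifold with a Lie group carrying a bi-invariant metric, and then recognise $\nac$ as the canonical left-invariant connection, which is manifestly flat.

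First I would reduce to the case $M=\R^4$: flatness of a connection is a local property, so we may pass to the universal cover and assume $M$ simply connected, hence isometric to $\R^4$. As noted in the statement and in \cite{Alb2}, $SM=\R^4\times S^3$ in canonical coordinates; moreover, since $TM=\R^4\times\R^4$ canonically with Sasaki metric equal to the product of two copies of the flat metric, restricting to $\|v\|=1$ shows that the Sasaki metric on $SM$ is the Riemannian product of the flat metric on $\R^4$ and the round (radius 1) metric on $S^3$. In particular $\calR=0$ and $A=0$, so \eqref{lcTM} collapses to $\nag=\na^\star$, which is just the product Levi-Civita connection on $\R^4\times S^3$.

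Next I would switch viewpoint to Lie groups. The round unit $S^3$ is isometric to $SU(2)$ with a bi-invariant metric, and $(\R^4,+)$ is the abelian Lie group whose bi-invariant metric is the flat one, so $SM\cong\R^4\times SU(2)$ is itself a Lie group with bi-invariant product metric. On such a group the \emph{left-invariant connection} $\nabla^L$, defined by $\nabla^L_XY=0$ on left-invariant vector fields, is flat, metric (by bi-invariance) and has totally skew-symmetric torsion whose 3-form, on left-invariant fields, is the Cartan form $T^L(X,Y,Z)=-\langle[X,Y],Z\rangle$. Since $\R^4$ is abelian, $T^L$ is supported on the vertical $SU(2)$ slots; using the adapted frame $e_0,\ldots,e_6$, the vertical $e_4,e_5,e_6$ may be chosen as a left-invariant orthonormal frame on $SU(2)$ satisfying $[e_4,e_5]=2e_6$ and cyclic (the normalisation enforcing sectional curvature $1$). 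A direct check then gives $\langle[X,Y],Z\rangle=2\alpha(X,Y,Z)$ for vertical $X,Y,Z$, hence $T^L=-2\alpha$, which by \eqref{torsaocaracteristicadeespacoforma} at $k=0$ equals $T^\ch$.

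Finally, both $\nac$ and $\nabla^L$ are metric connections on $SM$ with the same totally skew-symmetric torsion, so the standard identity $\langle\na_XY,Z\rangle=\langle\nag_XY,Z\rangle+\tfrac{1}{2}T(X,Y,Z)$ forces $\nac=\nabla^L$; as $\nabla^L$ is flat on the Lie group $\R^4\times SU(2)$, so is $\nac$. The only delicate point is matching $\alpha$ restricted to each fibre with half the Cartan 3-form on $SU(2)$; this reduces to the elementary relation $\omega_{S^3}=\tfrac{1}{2}\langle[\cdot,\cdot],\cdot\rangle$ on the round unit sphere, which follows from the structure constants above.
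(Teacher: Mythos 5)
Your argument is correct, but it proves the proposition by a genuinely different route than the paper. The paper works tensorially: since the base is flat, $\nag=\na^\star$ is the product Levi-Civita connection, the Gauss formula gives $R^g=-(e^{45}\otimes e^{45}+e^{56}\otimes e^{56}+e^{64}\otimes e^{64})$, and then the Friedrich--Ivanov curvature identity for a metric connection with skew torsion is applied to $T^\ch=-2e^{456}$, the quadratic torsion term exactly cancelling $R^g$ while the four-form term vanishes. You instead recognise $SM\cong\R^4\times S^3$ as the Lie group $\R^4\times SU(2)$ with bi-invariant product metric and identify $\nac$, via uniqueness of the metric connection with prescribed totally skew-symmetric torsion, with a flat Cartan connection whose torsion three-form is (minus) the Cartan form $\langle[\cdot,\cdot],\cdot\rangle=2\alpha$ on the fibre; this is precisely the viewpoint the author only mentions in the remark after the proof (the invariant $SO(3)$-connection ``known to \'E.\ Cartan''). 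Your approach is more conceptual: it explains \emph{why} the connection is flat and yields completeness and trivial holonomy for free; the paper's computation is self-contained and avoids the normalisation issues you must handle. Two small points of care in your write-up: the adapted vertical frame $e_4,e_5,e_6$ of the gwistor structure is not left-invariant on the fibres, but this is immaterial since all you need is the equality of three-forms $\alpha|_V=\pm\vol_{S^3}$ and $\langle[\cdot,\cdot],\cdot\rangle=2\vol_{S^3}$ for the curvature-one bi-invariant metric; and the sign ambiguity in matching $-2\alpha$ with the Cartan form is harmless, since both Cartan connections ($\na^\pm$, with torsions $\pm\langle[\cdot,\cdot],\cdot\rangle$) are flat and metric, so whichever sign the orientation produces, uniqueness still identifies $\nac$ with a flat connection. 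The reduction to $M=\R^4$ via the universal cover is legitimate because the gwistor structure and hence $\nac$ are locally determined by the metric and orientation.
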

\begin{proof}
The Levi-Civita connection of $TM$ is the flat connection $\pi^*\na^\LC=\dx$ duplicated
for $TTM=\pi^*TM\oplus\pi^*TM$. Then the Levi-Civita connection of $SM$ is just the
connection $\na^\star=\nag$ written in \eqref{correcterm}. By Gauss formula,
$R^g(X,Y,Z,W)=\langle X^v,W^v\rangle\langle Y^v,Z^v\rangle-
\langle X^v,Z^v\rangle\langle Y^v,W^v\rangle$. Using an adapted frame, cf.
\eqref{baseadaptada},
\[ R^g=-(e^{45}\otimes e^{45}+e^{56}\otimes e^{56}+e^{64}\otimes e^{64}).\] 
On the other hand, a formula in \cite{FriIva1} says
\[ R^\ch=R^g+\frac{1}{4}\sum(e_i\lrcorner T^\ch)\otimes(e_i\lrcorner
T^\ch)+\frac{1}{4}\sum (e_i\lrcorner T^\ch)\wedge(e_i\lrcorner T^\ch). \]
Thence, since $T^\ch=-2e^{456}$, we have 
\[  \frac{1}{4}\sum(e_i\lrcorner T^\ch)\otimes(e_i\lrcorner T^\ch)=e^{45}\otimes
e^{45}+e^{56}\otimes e^{56}+e^{64}\otimes e^{64}=-R^g. \]
and clearly $\sum (e_i\lrcorner T^\ch)\wedge(e_i\lrcorner T^\ch)=0$.
\end{proof}
Notice both metric connections preserve the Riemannian splitting. On the vertical side,
the connection $\nac$ is the invariant $SO(3)$-connection with skew-symmetric torsion
$-2\alpha$ described e.g. in \cite[Remark 2.1]{Agri} and known to \'E. Cartan.

\vspace{2cm}

\medskip

\end{document}